\documentclass[10pt, english, oneside, a4paper]{article}

\usepackage[utf8]{inputenc}
\usepackage{bm}
\usepackage[style=american]{csquotes}
\usepackage{rotating}
\usepackage{bbm}
\usepackage[hidelinks]{hyperref}
\usepackage[left=1.25in,right=1.25in,top=1.25in,bottom=1.25in]{geometry} 
\usepackage{etoolbox}
\usepackage{tcolorbox}
\usepackage{enumitem}
\usepackage{scrextend}
\usepackage{centernot}
\usepackage{lipsum}
\usepackage{amsfonts}
\usepackage{amssymb}
\usepackage{amsmath}
\usepackage{amsthm}
\usepackage{amsopn}
\usepackage{graphicx}
\usepackage{epstopdf}
\usepackage{benstyle_arXiv}
\usepackage{tikz}
\usepackage{bbm}
\usepackage{mathtools}
\usepackage{array}
\usepackage{rotating}
\usepackage{booktabs}
\usepackage[normalem]{ulem}

\usepackage{graphicx}
\usepackage{varioref, babel, fancyvrb, listings, amsmath,amsthm,  mathtools,bbm}
\usepackage{amssymb}
\usepackage{dsfont}
\usepackage{enumitem}
\usepackage{array}
\usepackage{booktabs}
\let\oldcap\cap
\usepackage{mathabx}
\let\cap\oldcap
\usepackage{cite}
\usepackage{times}
\usepackage{listings}
\usepackage[normalem]{ulem}
\usepackage[style=american]{csquotes}
\usepackage{cleveref}

\usepackage{lipsum}
\usepackage{amsfonts}
\usepackage{graphicx}
\usepackage{epstopdf}

\usepackage{algorithm}
\usepackage{algpseudocode}

\usepackage[dvipsnames]{xcolor}
\usepackage{pgfplots}
\pgfplotsset{compat=1.18}
\usepackage{subcaption}
\usepackage{tikz}

\usepackage{bm}
\usepackage{dsfont}

\usepackage{caption}
\usepackage[absolute]{textpos}

\usetikzlibrary{calc}
\usetikzlibrary{decorations.pathreplacing,calc}

\ifpdf
  \DeclareGraphicsExtensions{.eps,.pdf,.png,.jpg}
\else
  \DeclareGraphicsExtensions{.eps}
\fi

\numberwithin{equation}{section}

\theoremstyle{plain}
\newtheorem{theorem}{Theorem}[section]
\newtheorem{lemma}[theorem]{Lemma}
\newtheorem{proposition}[theorem]{Proposition}
\newtheorem{corollary}[theorem]{Corollary}

\theoremstyle{definition}
\newtheorem{definition}[theorem]{Definition}

\theoremstyle{remark}
\newtheorem{remark}[theorem]{Remark}

\usepackage{enumitem}
\usepackage[percent]{overpic}
\usepackage{pict2e}
\usepackage{url}

\usepackage{amsopn}
\usepackage{cite}

\AtBeginEnvironment{figure}{\setlength{\tabcolsep}{2pt}}

\tikzstyle{stuff_fill}=[rectangle,draw,fill=pink,minimum size=0.5em]
\usetikzlibrary{calc}
\usetikzlibrary{decorations.pathreplacing,calc}
\newcounter{arrow}
\title{Hoeffding-Type Concentration Bounds\\ for Exchangeable Random Variables}

\author{Nina M.~Gottschling$^{+,}$
 \thanks{Now at: Computing and Computational Sciences, Oak Ridge National Laboratory (ORNL), Oak Ridge, Tennessee.}
\and Michele Caprio
  \thanks{Department of Computer Science, University of Warwick, Warwick, United Kingdom.}}

\usepackage{amsopn}

\ifpdf
\hypersetup{
  pdftitle={Hoeffding-Style Concentration Bounds},
  pdfauthor={Nina M. Gottschling, and Michele Caprio}
}
\fi



\begin{document}

\maketitle

\footnotetext{
\noindent\fontsize{7}{7}\selectfont\textcolor{black!30}{This manuscript has been co-authored by UT-Battelle, LLC, under contract $DE-AC05-00OR22725$ with the US Department of Energy (DOE). The US government retains and the publisher, by accepting the article for publication, acknowledges that the US government retains a nonexclusive, paid-up, irrevocable, worldwide license to publish or reproduce the published form of this manuscript, or allow others to do so, for US government purposes. DOE will provide public access to these results of federally sponsored research in accordance with the DOE Public Access Plan (\url{http://energy.gov/downloads/doe-public-access-plan}).}
}

\begin{abstract}
We establish Hoeffding-type concentration inequalities for empirical
means of bounded infinitely exchangeable sequences. Using the unique
de Finetti mixing measure, we identify the appropriate concentration
set as the collection of means of the component distributions in the
support of the mixing law. The empirical mean concentrates
exponentially around this set, with the same exponential rate as in the
classical Hoeffding inequality. 
More sharply, the empirical mean concentrates directly around its
random almost sure de Finetti limit.
This also yields one sided bounds
relative to the smallest and largest component means. When the mixing
measure is concentrated at a single distribution, the sequence is
independent and identically distributed, and our result recovers  the
classical Hoeffding inequality.
\end{abstract}

\paragraph*{Keywords}
Concentration inequalities, uncertainty, exchangeable random variables

\section{Introduction} 
A common assumption in statistical modelling is that observations are independent and identically distributed (i.i.d.). Exchangeability is a weaker symmetry condition: it requires only that the joint distribution be invariant under permutations of the indices, without assuming independence. In some settings, such as linear models, it may be impossible to distinguish i.i.d. errors from merely exchangeable errors based on the data alone. For example, \cite{arnold1979linear} observes that \textit{“there is no nontrivial, unbiased or invariant test”} for i.i.d. against exchangeability. This consideration, together with existing tests for exchangeability \cite{kalina2023testing}, motivates the study of statistical guarantees that rely only on exchangeability. 

A natural and practically important question - especially in learning theory and generalization analysis - is {\em whether one can obtain distribution-free concentration bounds for sums or sample means of exchangeable random variables under minimal assumptions}. When the variance is unknown or difficult to control, variance-free inequalities of the Hoeffding type are particularly useful: for i.i.d. bounded random variables, Hoeffding’s inequality yields exponential tail bounds depending only on the sample size and the range \cite{hoeffding1994probability}. Such bounds underlie classical results in statistical learning theory, including VC generalization bounds \cite[Theorem 12.5]{devroye2013probabilistic}. 

In this paper, we answer this question affirmatively. We establish Hoeffding-type concentration bounds, with a
distribution-free exponential rate, for empirical means formed from
arbitrary finite subsets of a bounded infinitely exchangeable
sequence. The centering quantities are determined by the de Finetti
mixing law rather than by the common marginal distribution alone.
{\em We show that the empirical mean of a bounded infinitely exchangeable
sequence concentrates around the set of means of the component
distributions in the support of the de Finetti mixing measure, and
therefore around the smallest closed interval containing that set.} The exponential factor is uniform over all component distributions
supported on $[0,1]$ and depends only on the deviation threshold $t$
and the sample size $M$. More generally, for random variables taking
values in an interval $[a,b]$, an affine rescaling replaces
$e^{-2Mt^2}$ by
$\exp(-\frac{2Mt^2}{(b-a)^2})$.
The centering set and its extremal points are model-dependent; however,
they are determined by the support of the de Finetti mixing measure and
cannot in general be recovered from the common marginal distribution
alone. When these quantities are known or can be bounded a priori, the
result provides finite-sample bounds for the empirical mean. Thus,
whereas an i.i.d. sequence concentrates around a single deterministic
mean, an infinitely exchangeable sequence may converge to a random
limit whose support is determined by the de Finetti mixing law.

Our main result can be formalized as follows. Let
$(X_m)_{m\in\mathbb N}$ be an infinitely exchangeable sequence of
$[0,1]$-valued random variables, and let $\rho$ denote its de Finetti
mixing measure. For $q\in P[0,1]$, define the component mean
\[
\mu(q)
\coloneqq 
\int_{[0,1]}x q(\mathrm dx),
\]
and set
\[
\tilde\mu_+
\coloneqq 
\sup_{q\in\operatorname{supp}(\rho)}\mu(q),
\qquad
\tilde\mu_-
\coloneqq 
\inf_{q\in\operatorname{supp}(\rho)}\mu(q).
\]
For $M\in\mathbb N$, let
$\bar X_M
\coloneqq 
\frac1M\sum_{m=1}^M X_m$.
We prove that, for every $M\in\mathbb N$ and every $t>0$,
\begin{align}\label{eq:upbd}
    \mathbb{P}\left(\bar{X}_M \notin [\tilde{\mu}_{-}-t,\tilde{\mu}_{+}+t] \right) \leq 2 e^{-2Mt^2},
\end{align}

\noindent for $t>0$. The exponential factor $e^{-2Mt^2}$ is
distribution-free in the sense that it depends only on $t$, $M$, and
the prescribed range $[0,1]$. The centering quantities
$\tilde\mu_-$ and $\tilde\mu_+$ are not distribution-free: they depend
on the support of the de Finetti mixing measure and hence on the joint
exchangeable law. In particular, they are not generally determined by
the common marginal mean
$\mathbb E[X_1]
=
\int_{P[0,1]}\mu(q) \rho(\mathrm dq)$.
In the i.i.d. case, the mixing measure is concentrated at a single
distribution, and both centring quantities reduce to its mean. We
consider $t>0$, since $t=0$ gives only the trivial upper bound one,
while for $t<0$ no nontrivial upper bound is available under the stated
assumptions.

\section{Background and Related Work}

\noindent The literature on variance-free concentration inequalities
relevant to the present work includes three broad directions:
(i) concentration inequalities for sums of i.i.d. random variables,
beginning with \cite{hoeffding1994probability}, and for functions of
independent random variables, beginning with
\cite{mcdiarmid1989method};
(ii) concentration inequalities for functions of exchangeable random
variables under additional structural assumptions on the function
\cite{chatterjee2005concentration}; and
(iii) concentration inequalities for sums of exchangeable random
variables \cite{ramdas2026randomized,foygel2024hoeffding}.

To distinguish the different centring quantities appearing in these
results, define the common marginal mean by
\[
\mu_{\mathrm{marg}}
 \coloneqq 
\mathbb E[X_1].
\]
For an infinitely exchangeable sequence with de Finetti mixing measure
$\rho$, this mean satisfies
\[
\mu_{\mathrm{marg}}
=
\int_{P[0,1]}\mu(q) \rho(\mathrm dq),
\qquad
\mu(q)
 \coloneqq 
\int_{[0,1]}x q(\mathrm dx).
\]
Thus, $\mu_{\mathrm{marg}}$ is the average of the de Finetti component
means and should be distinguished from the individual values
$\mu(q)$.

\cite[Theorem 3.4]{ramdas2026randomized} establish a time-uniform
inequality for exchangeable sub-Gaussian random variables centred at
the common marginal mean. Under exchangeability alone, however, the
deviation threshold in that result does not decrease with the sample
size. A threshold decreasing at the classical square-root rate is
obtained there under the stronger assumption that the variables are
i.i.d. Consequently, this result does not imply that an arbitrary
exchangeable empirical mean converges to the common marginal mean.

A different framework is considered by
\cite{foygel2024hoeffding}; see also the published erratum
\cite{foygel2025erratum}. For a finite exchangeable vector
$X_1,\ldots,X_N$, define the full finite population mean by
$\bar X_N^{\mathrm{pop}}
\coloneqq 
\frac1N\sum_{i=1}^N X_i$.
For fixed weights $w_1,\ldots,w_n$, with $n\le N$, their
Hoeffding-type inequality controls
\[
\sum_{i=1}^n
w_i\left(X_i-\bar X_N^{\mathrm{pop}}\right).
\]
Equivalently, the weighted sum $\sum_{i=1}^n w_iX_i$ is centred at 
$(\sum_{i=1}^n w_i)\bar X_N^{\mathrm{pop}}$,
rather than at
$(\sum_{i=1}^n w_i)\mu_{\mathrm{marg}}$.
In the i.i.d. setting, with $n$ and the weights held fixed, the
strong law gives
\[
\bar X_N^{\mathrm{pop}}
\longrightarrow
\mu_{\mathrm{marg}}
\qquad\text{almost surely as }N\to\infty,
\]
and their result recovers the corresponding classical weighted
Hoeffding inequality. More recently, \cite{cheng2026concentration} extend this
finite exchangeability framework to structured weighted sums involving
mode exchangeable tensors and matrix-valued exchangeable data. Their
scalar specialization recovers the weighted sum inequalities of
\cite{foygel2024hoeffding} with the same constants. These results
concern finite population centred weighted sums and their structured
generalizations, rather than concentration around the set of de Finetti
component means. Subsequent work by \cite{lin2026bounded} develops bounded-difference
concentration inequalities for infinitely exchangeable sequences by
separating conditional sampling fluctuations from fluctuations of the
de Finetti mixture. Their general bounds around the common marginal
mean require sub-Gaussian control of the latent mixture fluctuation,
whereas this fluctuation vanishes for zero sum linear contrasts. These
results are complementary to our distribution-free bounds relative to
the de Finetti component mean set and its extremal points.

Our result uses a third centring object: the set of de Finetti
component means
\[
\mathcal M_\rho
=
\left\{
\mu(q):
q\in\operatorname{supp}(\rho)
\right\}.
\]
Under infinite exchangeability, the empirical mean generally converges
to a random component mean rather than to
$\mu_{\mathrm{marg}}$. We therefore obtain a sample size dependent
Hoeffding bound around $\mathcal M_\rho$, or equivalently one sided
bounds relative to its smallest and largest elements. In the i.i.d. 
case, the mixing measure is concentrated at a single distribution, so
$\mathcal M_\rho=\{\mu_{\mathrm{marg}}\}$ and the classical Hoeffding
inequality is recovered. 

Exchangeability arises in several areas of statistics, including
conformal prediction
\cite{vovk2005algorithmic,lei2018distribution,angelopoulos2023conformal,
caprio2026categorytheoreticanalysisconformalprediction}
and regression inference
\cite{candes2018panning,lei2021assumption}. Finite exchangeability,
which is weaker than infinite exchangeability, also appears naturally
in permutation testing
\cite{hemerik2018exact,ramdas2023permutation}. Finite de Finetti
approximations \cite{diaconis1980finite} may provide a route toward
extensions of the present result, but such extensions would require
controlling the corresponding approximation error and are left for
future work.

Variance-free concentration inequalities also play a central role in
statistical learning theory. For example, Hoeffding's inequality is
used in classical proofs of VC generalization bounds and of the
Glivenko-Cantelli theorem
\cite[Theorems 12.4 and 12.5]{devroye2013probabilistic}. We next recall
Hoeffding's inequality and the measure-theoretic form of de Finetti's
theorem used in our proof.

\subsection{Hoeffding's Inequality}

\noindent Our result and proof are closely related to Hoeffding's classical inequality \cite{hoeffding1994probability}; Indeed, our result recovers Hoeffding's inequality in the i.i.d. case. We recall the inequality next.

\begin{theorem}[Hoeffding's Inequality]\label{thm:hoeff}
Let $X_1,\ldots,X_M$ be independent and identically distributed random
variables taking values in $[0,1]$. Set
\[
\bar X_M \coloneqq \frac{1}{M}\sum_{m=1}^M X_m,
\qquad
\mu \coloneqq \mathbb E[X_1].
\]
Then, for every $t>0$,
\[
\mathbb P \left(\bar X_M-\mu\ge t\right)
\le e^{-2Mt^2},
\qquad
\mathbb P \left(\mu-\bar X_M\ge t\right)
\le e^{-2Mt^2}.
\]
\end{theorem}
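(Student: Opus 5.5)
We prove Theorem~\ref{thm:hoeff} by the standard Chernoff--Cramér exponential moment method, combined with Hoeffding's lemma for bounded random variables. The second inequality follows from the first applied to the i.i.d.\ variables $1-X_m\in[0,1]$, whose mean is $1-\mu$ and whose empirical mean is $1-\bar X_M$. So we only treat the upper tail $\mathbb P(\bar X_M-\mu\ge t)$.

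\emph{Step 1 (Chernoff bound).} For any $\lambda>0$, Markov's inequality applied to $e^{\lambda\sum_m (X_m-\mu)}$ gives
\[
\mathbb P\left(\bar X_M-\mu\ge t\right)
\le e^{-\lambda M t}\,\mathbb E\left[e^{\lambda\sum_{m=1}^M (X_m-\mu)}\right]
= e^{-\lambda M t}\left(\mathbb E\left[e^{\lambda (X_1-\mu)}\right]\right)^M .
\]
The equality uses independence, which factorizes the expectation, and identical distribution, which makes all factors equal.

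\emph{Step 2 (Hoeffding's lemma).} We show that for a random variable $Y$ with values in $[0,1]$ and mean $\mu$, and for every $\lambda\in\mathbb R$,
\[
\mathbb E\left[e^{\lambda(Y-\mu)}\right]\le e^{\lambda^2/8}.
\]
This is the main technical step. The plan is as follows.
\begin{itemize}
\item By convexity of $y\mapsto e^{\lambda y}$ on $[0,1]$, we have $e^{\lambda y}\le (1-y)+y e^{\lambda}$. Taking expectations gives $\mathbb E[e^{\lambda Y}]\le 1-\mu+\mu e^{\lambda}$.
\item Define $\varphi(\lambda)\coloneqq -\lambda\mu+\log(1-\mu+\mu e^{\lambda})$. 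Then $\mathbb E[e^{\lambda(Y-\mu)}]\le e^{\varphi(\lambda)}$.
\item Check that $\varphi(0)=0$ and $\varphi'(0)=0$.
\item Compute $\varphi''(\lambda)=p(1-p)$ with $p=\mu e^\lambda/(1-\mu+\mu e^\lambda)\in[0,1]$, so $\varphi''\le 1/4$.
\item Taylor's theorem with remainder then yields $\varphi(\lambda)\le \lambda^2/8$.
\end{itemize}
The only care needed is the degenerate cases $\mu\in\{0,1\}$. In those cases $Y$ is almost surely constant and the bound is trivial.

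\emph{Step 3 (optimization).} Combining Steps 1 and 2 gives
\[
\mathbb P(\bar X_M-\mu\ge t)\le \exp\left(-\lambda M t+M\lambda^2/8\right)
\quad\text{for all }\lambda>0.
\]
The exponent is minimized at $\lambda=4t$, which gives $e^{-2Mt^2}$. Together with the reflection argument for the lower tail, this completes the proof.
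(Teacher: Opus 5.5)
The paper gives no proof of Theorem~\ref{thm:hoeff}: it is recalled as classical background with a citation to Hoeffding, so the only point of comparison is the standard argument, and yours is correct. Your proposal is that standard argument: the Chernoff bound, the convexity estimate $e^{\lambda y}\le 1-y+ye^{\lambda}$, the Taylor bound $\varphi(\lambda)\le\lambda^2/8$ from $\varphi''=p(1-p)\le 1/4$, the choice $\lambda=4t$, and the reflection $X_m\mapsto 1-X_m$ for the lower tail. Your separate treatment of $\mu\in\{0,1\}$ is unnecessary, since $1-\mu+\mu e^{\lambda}>0$ for all $\mu\in[0,1]$ and the formula $\varphi''=p(1-p)$ remains valid there.
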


\subsection{de Finetti's Theorem}\label{de-finetti-section}

\noindent We present the measure-theoretic formulation of de Finetti’s theorem, which is central to our proof. We follow \cite{fritz2021finetti}, who develop and prove such a formulation of de Finetti’s original theorem \cite{de1929funzione} using an innovative category-theoretic argument. Given a measurable space $X$, consider the product $X^{\mathbb{N}}$ of countably many copies of $X$, equipped with the product $\sigma$-algebra. Consider a probability measure $p$ on $X^{\mathbb{N}}$. By convention, given a finite collection of measurable subsets $S_1, \ldots, S_M \subset X$, we write $p(S_1 \times \cdots \times S_M)$ as shorthand for the probability of the ``cylinder'' event
$p(S_1 \times \cdots \times S_M \times X \times X \times \cdots )$. These probabilities specify the marginal distribution of $p$ on the first $M$ components of $X^{\mathbb{N}}$. 

\begin{definition}[Infinitely Exchangeable Sequence]
\label{def:exch-def}
Let $X$ be a standard Borel space. A sequence of $X$-valued random
variables $(X_m)_{m\in\mathbb N}$ is called infinitely exchangeable if,
for every $M\in\mathbb N$ and every permutation $\pi$ of
$\{1,\ldots,M\}$,
\[
(X_1,\ldots,X_M)
\overset{\mathrm d}{=}
(X_{\pi(1)},\ldots,X_{\pi(M)}).
\]
\end{definition}

\noindent A basic example of an infinitely exchangeable law is a product measure.
For $q\in PX$, the product law $q^{\otimes\mathbb N}$ is exchangeable,
and its $M$-dimensional marginal satisfies
$q^{\otimes M}(S_1\times\cdots\times S_M)
=
\prod_{m=1}^M q(S_m)$, 
for every $M\in\mathbb N$ and all measurable sets
$S_1,\ldots,S_M\subseteq X$. Such a product law is the joint law of an
i.i.d. sequence with common distribution $q$. Based on the definition of exchangeability in Definition \ref{def:exch-def}, we now turn to de Finetti's Theorem itself. A convenient way to express the statement, which lends itself well to category-theoretical translations, is to use the concept of measures on a space of measures, as done by \cite[Section 2]{hewitt1955symmetric} and \cite{fritz2021finetti}. If $X$ is a standard Borel space, we denote by $PX$ the set of probability measures on $X$. The set $PX$ can be equipped with a canonical $\sigma$-algebra, namely the one generated by the functions $\epsilon_f: PX \rightarrow \mathbb{R}, p  \mapsto \int_X f(x) p(\mathrm{d}x)$, for all bounded measurable functions $f: X \rightarrow \mathbb{R}$. Measures on $PX$  (thus elements of $PPX$) can be thought of as random measures on $X$, where also the specific form of the distribution is subject to uncertainty; This draws a parallel with the literature on imprecise probability \cite{levi2,walley1991statistical,intro_ip,decooman}, particularly with the concept of second-order distributions studied therein. Equivalently, measures on $PX$ describe mixtures of measures either in the sense of finite convex combinations, or integrals. Indeed, de Finetti’s theorem can be summarized as saying that every exchangeable measure is a mixture of product measures. We now state the result precisely for standard Borel spaces.

\begin{theorem}[de Finetti's Theorem]\label{thm:definetti}
Let $X$ be a standard Borel space. A probability measure $p$ on
$X^{\mathbb N}$ is exchangeable if and only if there exists a unique
probability measure $\rho$ on $PX$ such that, for every
$A\in\mathcal B(X^{\mathbb N})$,
\[
p(A)
=
\int_{PX}q^{\otimes\mathbb N}(A) \rho(\mathrm dq).
\]
In particular, for every $M\in\mathbb N$ and every
$A\in\mathcal B(X^M)$,
\[
p_M(A)
=
\int_{PX}q^{\otimes M}(A) \rho(\mathrm dq),
\]
where $p_M$ denotes the $M$-dimensional marginal of $p$.
\end{theorem}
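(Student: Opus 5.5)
The \enquote{if} direction is immediate, and I would do it first. For each $q\in PX$ the product law $q^{\otimes\mathbb N}$ is invariant under every finite permutation of coordinates. Hence, for any measurable $A$ and any permutation $\pi$ of $\{1,\ldots,M\}$, integrating the identity $q^{\otimes\mathbb N}(\pi A)=q^{\otimes\mathbb N}(A)$ against $\rho$ gives $p(\pi A)=p(A)$. One also has to check that $q\mapsto q^{\otimes\mathbb N}(A)$ is measurable for the $\sigma$-algebra generated by the maps $\epsilon_f$, so that the integral makes sense. For cylinders $S_1\times\cdots\times S_M$ this map is $\prod_m \epsilon_{\mathbbm 1_{S_m}}(q)$, which is measurable, and a $\pi$--$\lambda$ (monotone class) argument extends measurability to all of $\mathcal B(X^{\mathbb N})$. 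The second displayed formula follows from the first by taking $A=B\times X\times X\times\cdots$ with $B\in\mathcal B(X^M)$.

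For the \enquote{only if} direction I would use the empirical-measure and reverse-martingale route. By the Borel isomorphism theorem, assume $X$ is a Borel subset of $[0,1]$; if $X$ is countable the argument is simpler. Let $(X_m)$ have law $p$ and let $\mathcal E$ be the exchangeable $\sigma$-field. Let $\mathcal E_n$ be the $\sigma$-field of events invariant under permutations of the first $n$ coordinates, so that $\mathcal E_n\downarrow\mathcal E$. For bounded measurable $f$, exchangeability gives
\[
\tfrac1n\sum_{i=1}^n f(X_i)=\mathbb E[f(X_1)\mid\mathcal E_n].
\]
The reverse martingale convergence theorem then shows that $\tfrac1n\sum_{i\le n} f(X_i)\to\mathbb E[f(X_1)\mid\mathcal E]$ almost surely.

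Next I would build a random probability measure $Q$ out of these limits. Apply the convergence to the countable family $f_r=\mathbbm 1_{[0,r]}$, $r\in\mathbb Q$. Off a null set this yields a limiting function of $r$ that is nondecreasing and right-continuous after regularization, i.e.\ a random distribution function; this defines $Q(\omega)$ as a random probability measure on $[0,1]$. The same martingale identities show that $Q(\omega)$ charges $X$ almost surely, so $Q$ can be regarded as $PX$-valued. The event $\{Q(B)=\mathbb E[\mathbbm 1_B(X_1)\mid\mathcal E]\ \text{a.s.}\}$ holds for the $\pi$-system of intervals and is closed under monotone limits, so it holds for all Borel $B$. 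This gives both the measurability of $Q$ with respect to the $\epsilon_f$-generated $\sigma$-algebra and $\mathbb E[f(X_1)\mid\mathcal E]=\int f\,\mathrm dQ$.

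The key step is conditional independence. For bounded $f_1,\ldots,f_k$, exchangeability gives
\[
\mathbb E\Bigl[\prod_{j=1}^k f_j(X_j)\Bigm|\mathcal E_n\Bigr]
\]
as the average of $\prod_j f_j(X_{i_j})$ over injective tuples $(i_1,\ldots,i_k)$ from $\{1,\ldots,n\}$. This average differs from $\prod_j \tfrac1n\sum_{i\le n} f_j(X_i)$ by $O(k^2/n)$. Letting $n\to\infty$ and using reverse martingale convergence on the left yields
\[
\mathbb E\Bigl[\prod_{j=1}^k f_j(X_j)\Bigm|\mathcal E\Bigr]=\prod_{j=1}^k\int f_j\,\mathrm dQ .
\]
Setting $\rho:=\mathrm{Law}(Q)$ and taking expectations gives $p(S_1\times\cdots\times S_k)=\int q^{\otimes k}(S_1\times\cdots\times S_k)\,\rho(\mathrm dq)$. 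The $\pi$--$\lambda$ theorem then extends this from cylinders to all $A$.

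For uniqueness, suppose $\rho$ is any representing measure. Under each $q^{\otimes\mathbb N}$ the strong law, applied to the countable family $f_r$, shows that the empirical measures converge almost surely to $q$. Hence, under $p=\int q^{\otimes\mathbb N}\,\rho(\mathrm dq)$, the almost-sure limit of the empirical measures is a random element whose law is $\rho$. Since this limit is a fixed measurable functional of the sequence, its law is determined by $p$ alone, so $\rho$ is unique.

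I expect the main technical obstacle to be the construction of $Q$ as a genuine, measurable $PX$-valued random element. This requires countable additivity on a single null-set-free event, measurability with respect to the $\epsilon_f$-$\sigma$-algebra, and handling a Borel $X\subsetneq[0,1]$. The rational-CDF regularization combined with monotone class arguments is how I would handle it.
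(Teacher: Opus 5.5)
Your proof is correct, but it takes a different route from the paper. The paper does not prove Theorem~\ref{thm:definetti} itself. It imports the result from \cite{fritz2021finetti}, where de Finetti's theorem is derived synthetically in the language of Markov categories (abstract conditionals, countable Kolmogorov products, the distribution functor $P$) and then shown to specialise to the category of standard Borel spaces and Markov kernels. You instead give the classical measure-theoretic proof. It runs reverse martingales along $\mathcal E_n\downarrow\mathcal E$ and builds a directing random measure $Q$ by regularising the rational empirical CDFs. Conditional independence comes from the $O(k^2/n)$ comparison of injective and unrestricted averages, you set $\rho=\mathrm{Law}(Q)$, and uniqueness follows by recovering $q$ from $q^{\otimes\mathbb N}$-samples through the strong law. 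The categorical route gives an abstract, diagrammatic argument that transfers to any Markov category with the required structure. However, its genuinely measure-theoretic content (regular conditional distributions on standard Borel spaces, Kolmogorov extension) is hidden in checking that the axioms hold. Your route is self-contained and constructive, and it fits the rest of the paper better. It identifies $\rho$ as the law of the almost sure limit of the empirical measures, so $L=\int x\,Q(\mathrm dx)$ in Proposition~\ref{prop:definetti-limit}, and conditionally on $\mathcal E$ the variables are i.i.d.\ with law $Q$. Corollary~\ref{cor:definetti-limit-concentration} then follows by applying Hoeffding's inequality conditionally on $\mathcal E$. Your uniqueness step is the same strong-law-and-integrate device the paper uses to prove Proposition~\ref{prop:definetti-limit}. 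One small point of rigour: a class that contains the $\pi$-system of intervals and is closed under monotone limits need not contain all Borel sets. You should verify the Dynkin-system properties (closure under complements and countable disjoint unions, both immediate from the countable additivity of $Q$ and of conditional expectation). Alternatively, apply the monotone class theorem to the algebra generated by the intervals.
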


\noindent For example, the i.i.d.  case corresponds to $\rho$ being the Dirac
measure $\delta_r\in PPX$ at some distribution $r\in PX$, in which case
$p=r^{\otimes\mathbb N}$. Then, Theorem \ref{thm:definetti} yields, for any $S_1,\dots,S_M$,
\begin{align}\label{eq:independet}
p(S_1 \times \cdots \times S_M)
= \int_{PX} q^{\otimes M}(S_1\times\cdots\times S_M)
 \delta_r(\mathrm dq) =
\prod_{m=1}^M r(S_m).
\end{align}

Equip $P[0,1]$ with the topology of weak convergence and its Borel
$\sigma$-algebra.

\begin{definition}[Support of the Mixing Measure]
\label{def:support-mixing}
Let $\rho$ be a probability measure on $P[0,1]$. Its support is defined
by
\[
\operatorname{supp}(\rho)
 \coloneqq 
\left\{
q\in P[0,1]:
\rho(U)>0
\text{ for every open neighbourhood }U\text{ of }q
\right\}.
\]
\end{definition}

\section{Hoeffding-type bounds for exchangeable random variables}

\noindent First, notice that infinite exchangeability in Definition \ref{def:exch-def} implies finite exchangeability \cite[Theorem 14.36]{klenkewt2013}. The following is our main result.

\begin{lemma}[Hoeffding-Type Bounds for Exchangeable Random Variables]\label{lem:exchhoeff}
    Let $(\Omega, \mathcal{F}, \mathbb{P})$ be a probability space. Let $(X_m)_{m\in\mathbb N}$ be an infinitely exchangeable sequence of
$[0,1]$-valued random variables, and let $\rho$ be its de Finetti mixing
measure. Define
\[
\mu(q) \coloneqq \int_{[0,1]}x q(\mathrm dx),
\qquad
\tilde\mu_+ \coloneqq \sup_{q\in\operatorname{supp}(\rho)}\mu(q),
\qquad
\tilde\mu_- \coloneqq \inf_{q\in\operatorname{supp}(\rho)}\mu(q).
\]
For $M\in\mathbb N$, set
$\bar X_M \coloneqq \frac{1}{M}\sum_{m=1}^M X_m$. 
Then, for every $M\in\mathbb N$ and every $t>0$,
\begin{align}\label{eq:upbd1}
\mathbb P \left(\bar X_M-\tilde\mu_+\ge t\right)
&\le e^{-2Mt^2}, \qquad
\mathbb P \left(\tilde\mu_- -\bar X_M\ge t\right)\le e^{-2Mt^2}.
\end{align}
\end{lemma}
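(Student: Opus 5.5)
The plan is to condition on the de Finetti component, apply the classical Hoeffding inequality (Theorem~\ref{thm:hoeff}) to each i.i.d.\ component, and then integrate against $\rho$. I give the argument for the upper tail; the lower tail is symmetric, by replacing $X_m$ with $1-X_m$ or repeating the argument with $\tilde\mu_-$.

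Fix $M$ and $t>0$. We may assume $\tilde\mu_+<\infty$. This holds automatically, since every $q\in P[0,1]$ has $\mu(q)\in[0,1]$ and $\operatorname{supp}(\rho)$ is nonempty: $P[0,1]$ is a compact metrizable space, hence separable, so the support has full $\rho$-measure. Consider the event
\[
A \coloneqq \{x\in[0,1]^M : \tfrac1M\textstyle\sum_{m=1}^M x_m - \tilde\mu_+ \ge t\},
\]
which is Borel. By Theorem~\ref{thm:definetti} applied to the $M$-dimensional marginal,
\[
\mathbb P(\bar X_M-\tilde\mu_+\ge t)=\int_{P[0,1]} q^{\otimes M}(A)\,\rho(\mathrm dq).
\]
One should note that $q\mapsto q^{\otimes M}(A)$ is measurable for the canonical $\sigma$-algebra on $P[0,1]$. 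This follows from a standard monotone-class argument starting from rectangles, and it is implicit in the statement of Theorem~\ref{thm:definetti}.

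Now split the integral over $\operatorname{supp}(\rho)$ and its complement. Separability of $P[0,1]$ under the weak topology gives $\rho(P[0,1]\setminus\operatorname{supp}(\rho))=0$, because the complement is a countable union of $\rho$-null open sets. For $q\in\operatorname{supp}(\rho)$ we have $\mu(q)\le\tilde\mu_+$, and therefore
\[
q^{\otimes M}(A)\le q^{\otimes M}\Big(\tfrac1M\textstyle\sum x_m-\mu(q)\ge t\Big)\le e^{-2Mt^2}.
\]
The last inequality is Theorem~\ref{thm:hoeff}, applied to i.i.d.\ coordinates with law $q$ under $q^{\otimes M}$. Integrating this bound against $\rho$ gives the first inequality in \eqref{eq:upbd1}. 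The second follows in the same way using $\mu(q)\ge\tilde\mu_-$.

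The main technical point is the null-complement property of the support together with the measurability of the integrand. The first holds because the weak topology on $P[0,1]$ is second countable (it is Polish, indeed compact). One must also check that the Borel $\sigma$-algebra of the weak topology coincides with the canonical $\sigma$-algebra generated by the maps $\epsilon_f$, so that ``support'' and the mixing measure live on the same $\sigma$-algebra. This is a standard fact for standard Borel spaces, and I would cite it rather than reprove it. Once these two points are in place, the rest is routine monotonicity of events.
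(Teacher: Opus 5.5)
Your proposal is correct and follows the same route as the paper: for each $q\in\operatorname{supp}(\rho)$ you bound $q^{\otimes M}(A)$ by the classical Hoeffding inequality, using $\mu(q)\le\tilde\mu_+$ (resp.\ $\mu(q)\ge\tilde\mu_-$), and then integrate against $\rho$ through de Finetti's representation of the $M$-dimensional marginal. Your extra justifications make explicit what the paper uses silently when it replaces $\int_{P[0,1]}$ by $\int_{\operatorname{supp}(\rho)}$: that $\operatorname{supp}(\rho)$ has full $\rho$-measure by second countability of $P[0,1]$, that $q\mapsto q^{\otimes M}(A)$ is measurable, and that the Borel and canonical $\sigma$-algebras on $P[0,1]$ agree.
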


\noindent Section~\ref{sec:example} provides a numerical illustration
of these bounds.
The supremum and infimum over $\mathrm{supp}(\rho)$ can equivalently be expressed as the essential supremum and infimum with respect to $\rho$, i.e. $\tilde{\mu}_{+}
= \operatorname*{esssup}_{q \sim \rho}\mu(q)$ and $\tilde{\mu}_{-} = \operatorname*{essinf}_{q \sim \rho}\mu(q)$. Indeed, the map $q \mapsto \mu(q)=\int_{[0,1]}x q(\mathrm{d}x)$ is continuous with respect to the weak topology on $P[0,1]$. Thus, if $a<\sup_{q\in\mathrm{supp}(\rho)}\mu(q)$, then $\{\mu>a\}$ contains an open neighbourhood of some point in $\mathrm{supp}(\rho)$ and therefore has positive $\rho$-measure. The analogous argument applies to the infimum. Hence, the extrema over $\mathrm{supp}(\rho)$ coincide with the corresponding essential extrema, even when the extremal values are attained only on sets of $\rho$-measure zero. 

\begin{proof}[Proof of Lemma~\ref{lem:exchhoeff}]
Let
$p
=
\mathbb P\circ (X_m)_{m\in\mathbb N}^{-1}$ 
be the joint law of the sequence on $[0,1]^{\mathbb N}$, and, for
$M\in\mathbb N$, let
$p_M
 \coloneqq 
\mathbb P\circ(X_1,\ldots,X_M)^{-1}$ 
denote its $M$-dimensional marginal. Since the sequence is infinitely
exchangeable, $p$ is exchangeable. Therefore, by
Theorem~\ref{thm:definetti}, for every
$A\in\mathcal B([0,1]^M)$,
\[
p_M(A)
=
\int_{P[0,1]}q^{\otimes M}(A) \rho(\mathrm dq).
\]

We first prove the upper-tail bound. For fixed
$q\in\operatorname{supp}(\rho)$, define
\begin{align*}
A_t
& \coloneqq 
\left\{
x=(x_1,\ldots,x_M)\in[0,1]^M:
\frac1M\sum_{m=1}^M x_m-\tilde\mu_+\ge t
\right\},\\
B_t(q)
& \coloneqq 
\left\{
x=(x_1,\ldots,x_M)\in[0,1]^M:
\frac1M\sum_{m=1}^M x_m-\mu(q)\ge t
\right\}.
\end{align*}
By definition, 
$\left\{\bar X_M-\tilde\mu_+\ge t\right\}
=
(X_1,\ldots,X_M)^{-1}(A_t)$, 
and hence
\[
\mathbb P \left(\bar X_M-\tilde\mu_+\ge t\right)
=
p_M(A_t).
\]

For every $q\in\operatorname{supp}(\rho)$, we have
$\mu(q)\le\tilde\mu_+$, and therefore
$A_t\subseteq B_t(q)$. 
Under $q^{\otimes M}$, the coordinate variables are i.i.d. with mean
$\mu(q)$. Thus, by Hoeffding's inequality,
$q^{\otimes M}(A_t)
\le
q^{\otimes M}(B_t(q))
\le
e^{-2Mt^2}$. 
Integrating this inequality with respect to the mixing measure gives
\begin{align*}
\mathbb P \left(\bar X_M-\tilde\mu_+\ge t\right)&=
p_M(A_t)=
\int_{P[0,1]}q^{\otimes M}(A_t) \rho(\mathrm dq)=
\int_{\operatorname{supp}(\rho)}
q^{\otimes M}(A_t) \rho(\mathrm dq)\\&\le
\int_{\operatorname{supp}(\rho)}
e^{-2Mt^2} \rho(\mathrm dq)=
e^{-2Mt^2}.
\end{align*}

For the lower tail bound, define, for
$q\in\operatorname{supp}(\rho)$,
\begin{align*}
A_t^-
& \coloneqq 
\left\{
x=(x_1,\ldots,x_M)\in[0,1]^M:
\tilde\mu_--\frac1M\sum_{m=1}^M x_m\ge t
\right\},\\
B_t^-(q)
& \coloneqq 
\left\{
x=(x_1,\ldots,x_M)\in[0,1]^M:
\mu(q)-\frac1M\sum_{m=1}^M x_m\ge t
\right\}.
\end{align*}
Since $\mu(q)\ge\tilde\mu_-$ for every
$q\in\operatorname{supp}(\rho)$, we have
$A_t^-\subseteq B_t^-(q)$. 
The lower tail form of Hoeffding's inequality therefore gives
\[
q^{\otimes M}(A_t^-)
\le
q^{\otimes M}(B_t^-(q))
\le
e^{-2Mt^2}.
\]
Consequently,
\begin{align*}
\mathbb P \left(\tilde\mu_--\bar X_M\ge t\right)
&=
p_M(A_t^-)=
\int_{P[0,1]}q^{\otimes M}(A_t^-) \rho(\mathrm dq)\le
e^{-2Mt^2}.
\end{align*}
This proves both inequalities.
\end{proof}

Two remarks are in order.

\begin{remark}[Interpretation of the Proof and Relation to Hoeffding's Inequality]
\label{rem:proof-strategy}
The proof applies the classical Hoeffding inequality conditionally on
a de Finetti component. For each
$q\in\operatorname{supp}(\rho)$, the coordinate variables are
i.i.d. under $q^{\otimes M}$, with common mean $\mu(q)$. Moreover,
$\mu(q)\le\tilde\mu_+$ and 
$\mu(q)\ge\tilde\mu_-$. 
Consequently, a deviation beyond either extremal component mean implies
the corresponding deviation from $\mu(q)$. Hoeffding's inequality may
therefore be applied under each product law $q^{\otimes M}$ and then
integrated with respect to the mixing measure $\rho$.

This explains why the exponential rate is the same as in the classical
i.i.d. inequality, while the centring quantities are determined by
the support of the de Finetti mixing measure. The exponential constant
need not be optimal for every exchangeable model. The interval
$[\tilde\mu_-,\tilde\mu_+]$ is the smallest closed interval containing
the component mean set $\mathcal M_\rho$, but it may also contain points
that do not correspond to possible component means.
\end{remark}

\begin{remark}[Averages Over Arbitrary Finite Subsets]
\label{rem:arbitrary-subsets}
Let $i_1,\ldots,i_M\in\mathbb N$ be distinct indices, and define
$\bar X_{\{i_1,\ldots,i_M\}}
 \coloneqq 
\frac{1}{M}\sum_{j=1}^M X_{i_j}$. 
By infinite exchangeability,
\[
(X_{i_1},\ldots,X_{i_M})
\overset{\mathrm d}{=}
(X_1,\ldots,X_M).
\]
Consequently, Lemma~\ref{lem:exchhoeff} also gives
\[
\mathbb P \left(
\bar X_{\{i_1,\ldots,i_M\}}-\tilde\mu_+\ge t
\right)
\le e^{-2Mt^2}
\quad
\text{and}
\quad
\mathbb P \left(
\tilde\mu_--\bar X_{\{i_1,\ldots,i_M\}}\ge t
\right)
\le e^{-2Mt^2}.
\]
Thus, the bounds depend only on the number of selected coordinates and
not on their locations in the sequence.
\end{remark}

Lemma~\ref{lem:exchhoeff} controls deviations of the empirical mean outside the interval determined by the extreme component means. The same argument yields a sharper, set-valued formulation: rather than replacing the set of possible component means by its interval hull, one may measure the distance of the empirical mean from the component-mean set itself. This refinement is particularly informative when the component-mean set is disconnected, since the interval $[\tilde\mu_-,\tilde\mu_+]$ then contains points that do not correspond to any possible component mean.

\begin{corollary}[Concentration Around the Component Mean Set]
\label{cor:mean-set}
Let
$\mathcal M_\rho
 \coloneqq 
\left\{
\mu(q):q\in\operatorname{supp}(\rho)
\right\}$. 
Then, for every $M\in\mathbb N$ and every $t>0$,
\[
\mathbb P \left(
\operatorname{dist} \left(\bar X_M,\mathcal M_\rho\right)
\ge t
\right)
\le 2e^{-2Mt^2}.
\]
\end{corollary}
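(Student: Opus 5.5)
We follow the proof of Lemma~\ref{lem:exchhoeff}: condition on a de Finetti component, apply Hoeffding's inequality under the product law, and integrate against $\rho$. The only new ingredient is a set inclusion. For each component in the support, the event that $\bar X_M$ is far from the whole set $\mathcal M_\rho$ is contained in the event that $\bar X_M$ is far from that particular component mean.

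Fix $M$ and $t>0$, and define the Borel set
\[
D_t \coloneqq \left\{x\in[0,1]^M:\operatorname{dist}\left(\tfrac1M\textstyle\sum_{m=1}^M x_m,\mathcal M_\rho\right)\ge t\right\}.
\]
Measurability is not an issue, because $y\mapsto\operatorname{dist}(y,\mathcal M_\rho)$ is $1$-Lipschitz for any nonempty set; $\mathcal M_\rho$ is nonempty since $\operatorname{supp}(\rho)\neq\emptyset$ on the compact metrizable space $P[0,1]$. We then have
\[
\mathbb P\left(\operatorname{dist}(\bar X_M,\mathcal M_\rho)\ge t\right)=p_M(D_t)=\int_{P[0,1]}q^{\otimes M}(D_t)\,\rho(\mathrm dq)=\int_{\operatorname{supp}(\rho)}q^{\otimes M}(D_t)\,\rho(\mathrm dq),
\]
where the last equality uses $\rho(\operatorname{supp}(\rho))=1$. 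This holds because $P[0,1]$ is a separable metric space, which the proof of Lemma~\ref{lem:exchhoeff} also used implicitly.

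Now fix $q\in\operatorname{supp}(\rho)$. Then $\mu(q)\in\mathcal M_\rho$, so $\operatorname{dist}(y,\mathcal M_\rho)\le|y-\mu(q)|$ for every real $y$. Hence
\[
D_t\subseteq\left\{x:\tfrac1M\textstyle\sum_m x_m-\mu(q)\ge t\right\}\cup\left\{x:\mu(q)-\tfrac1M\textstyle\sum_m x_m\ge t\right\}.
\]
Under $q^{\otimes M}$ the coordinates are i.i.d. on $[0,1]$ with mean $\mu(q)$. By the union bound and both tails of Theorem~\ref{thm:hoeff}, $q^{\otimes M}(D_t)\le 2e^{-2Mt^2}$. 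Integrating this bound over $\operatorname{supp}(\rho)$ gives the claim.

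The argument is essentially routine. The only step needing care is the restriction of the integral to $\operatorname{supp}(\rho)$, that is, showing that the complement of the support is $\rho$-null. This is the standard fact for Borel measures on separable metric spaces: the complement of the support is a countable union of basic open null sets. It is what allows the pointwise inclusion to be applied only for $q$ in the support. Here $P[0,1]$ carries the weak topology, which is metrizable (for instance by the Prokhorov metric) and compact, hence separable.
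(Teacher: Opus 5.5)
Your proposal is correct and follows essentially the same route as the paper. The paper uses the same inclusion of the distance event into the two-sided deviation event around $\mu(q)$, then the two one-sided Hoeffding bounds with a union bound under $q^{\otimes M}$, and finally integration of the de Finetti representation over $\operatorname{supp}(\rho)$. Your added justifications, namely the measurability of $D_t$, the nonemptiness of $\mathcal M_\rho$, and $\rho(\operatorname{supp}(\rho))=1$ via second countability of $P[0,1]$, fill in details the paper leaves implicit.
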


\begin{proof}
Fix $M\in\mathbb N$ and $t>0$, and define
\[
C_t
\coloneqq 
\left\{
x=(x_1,\ldots,x_M)\in[0,1]^M:
\operatorname{dist} \left(
\frac1M\sum_{m=1}^M x_m,
\mathcal M_\rho
\right)
\ge t
\right\}.
\]
For $q\in\operatorname{supp}(\rho)$, define
\[
D_t(q)
\coloneqq 
\left\{
x=(x_1,\ldots,x_M)\in[0,1]^M:
\left|
\frac1M\sum_{m=1}^M x_m-\mu(q)
\right|
\ge t
\right\}.
\]
Since $\mu(q)\in\mathcal M_\rho$, we have
$C_t\subseteq D_t(q)$, 
for every $q\in\operatorname{supp}(\rho)$. 
Under $q^{\otimes M}$, the coordinate variables are i.i.d. with mean
$\mu(q)$. The two one-sided Hoeffding inequalities and the union bound
therefore give
\[
q^{\otimes M}(C_t)
\le
q^{\otimes M}(D_t(q))
\le
2e^{-2Mt^2}.
\]
Using de Finetti's representation,
\begin{align*}
\mathbb P \left(
\operatorname{dist}(\bar X_M,\mathcal M_\rho)\ge t
\right)
=
\int_{\operatorname{supp}(\rho)}
q^{\otimes M}(C_t) \rho(\mathrm dq) \le
2e^{-2Mt^2}.
\end{align*}
\end{proof}

We now see how $\mathcal M_\rho$ is precisely the support of the random
almost sure limit of the empirical means. 

\begin{proposition}[The de Finetti Limit]\label{prop:definetti-limit}
Under the assumptions of Lemma~\ref{lem:exchhoeff}, there exists an
$[0,1]$-valued random variable $L$ such that
\[
\bar X_M\longrightarrow L
\qquad\text{almost surely}.
\]
Moreover,
\[
\mathcal L(L)=\mu_{\#}\rho,
\]
where $\mu_{\#}\rho$ denotes the pushforward of $\rho$ under the map
\[
q\longmapsto \mu(q)=\int_{[0,1]}x q(\mathrm dx).
\]
Consequently,
\[
\operatorname{supp}\mathcal L(L)=\mathcal M_\rho.
\]
\end{proposition}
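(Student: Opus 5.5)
The plan is to reduce everything to the i.i.d.\ case via the de Finetti representation of Theorem~\ref{thm:definetti}, as in the proof of Lemma~\ref{lem:exchhoeff}, but now for events on the full path space $[0,1]^{\mathbb N}$ rather than on $[0,1]^M$. Let $p$ be the joint law of $(X_m)_{m\in\mathbb N}$. Define the random variable
\[
L\coloneqq\limsup_{M\to\infty}\bar X_M,
\]
which is measurable and $[0,1]$-valued. Let
\[
A\coloneqq\Bigl\{x\in[0,1]^{\mathbb N}:\tfrac1M\textstyle\sum_{m\le M}x_m \text{ converges}\Bigr\},
\]
which is a Borel subset of $[0,1]^{\mathbb N}$. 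By Theorem~\ref{thm:definetti},
\[
\mathbb P\bigl((X_m)_m\in A\bigr)=\int_{P[0,1]}q^{\otimes\mathbb N}(A)\,\rho(\mathrm dq).
\]
Under $q^{\otimes\mathbb N}$ the coordinates are i.i.d.\ with mean $\mu(q)$, so the strong law of large numbers gives $q^{\otimes\mathbb N}(A)=1$ for every $q$. Hence $\mathbb P((X_m)_m\in A)=1$, and on this event $\bar X_M\to L$. The measurability of $q\mapsto q^{\otimes\mathbb N}(A)$ is part of the content of Theorem~\ref{thm:definetti}, so I take it as given.

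For the law of $L$, I fix a Borel set $B\subseteq[0,1]$ and apply the representation to the Borel event
\[
E_B\coloneqq\Bigl\{x:\limsup_M \tfrac1M\textstyle\sum_{m\le M}x_m\in B\Bigr\}.
\]
By the strong law again, $q^{\otimes\mathbb N}(E_B)=\mathbbm 1_B(\mu(q))$. Integrating gives
\[
\mathbb P(L\in B)=\int\mathbbm 1_B(\mu(q))\,\rho(\mathrm dq)=\mu_\#\rho(B).
\]
Measurability of $\mu$ holds because $\mu=\epsilon_f$ with $f(x)=x$, which is also why $\mu$ is weakly continuous, as noted after Lemma~\ref{lem:exchhoeff}.

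The support identity $\operatorname{supp}(\mu_\#\rho)=\mathcal M_\rho$ is the step I expect to need the most care, mainly because closedness of $\mathcal M_\rho$ must be established. First, $P[0,1]$ with the weak topology is compact metrizable by Prokhorov's theorem. Hence $\operatorname{supp}(\rho)$ is a closed, thus compact, subset, and $\mathcal M_\rho=\mu(\operatorname{supp}\rho)$ is compact and therefore closed in $[0,1]$.

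For the inclusion $\mathcal M_\rho\subseteq\operatorname{supp}(\mu_\#\rho)$, take $y=\mu(q)$ with $q\in\operatorname{supp}\rho$ and $\varepsilon>0$. The preimage $\mu^{-1}((y-\varepsilon,y+\varepsilon))$ is an open neighbourhood of $q$, so it has positive $\rho$-mass. This means $\mu_\#\rho((y-\varepsilon,y+\varepsilon))>0$.

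For the reverse inclusion, let $y\notin\mathcal M_\rho$. Since $\mathcal M_\rho$ is closed, there is an open interval $V\ni y$ disjoint from $\mathcal M_\rho$. Then $\mu^{-1}(V)\cap\operatorname{supp}\rho=\emptyset$. Because $P[0,1]$ is second countable, $\rho(\operatorname{supp}\rho)=1$, and so $\mu_\#\rho(V)=\rho(\mu^{-1}(V))=0$. Thus $y\notin\operatorname{supp}(\mu_\#\rho)$, which completes the proof.
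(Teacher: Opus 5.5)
Your proposal is correct and follows essentially the same route as the paper. The paper applies the de Finetti representation to the path-space convergence event and to the events $\{\lim_M \bar X_M\in B\}$, uses the strong law to get $q^{\otimes\mathbb N}$-probabilities $1$ and $\mathbf 1_B(\mu(q))$, and then derives the support identity from continuity of $\mu$ and compactness of $\operatorname{supp}(\rho)$. Your explicit argument for $\operatorname{supp}(\mu_\#\rho)\subseteq\mathcal M_\rho$, via closedness of $\mathcal M_\rho$ and $\rho(\operatorname{supp}\rho)=1$, fills in a step the paper leaves implicit, and defining $L$ everywhere as a $\limsup$ neatly handles the null set.
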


\begin{proof}
Let
$p
=
\mathbb P\circ(X_m)_{m\in\mathbb N}^{-1}$
be the joint law of the sequence, and define
\[
C
\coloneqq 
\left\{
x\in[0,1]^{\mathbb N}:
\frac1M\sum_{m=1}^M x_m
\text{ converges as }M\to\infty
\right\}.
\]
For every $q\in P[0,1]$, the strong law of large numbers gives
$q^{\otimes\mathbb N}(C)=1$.
Therefore, by Theorem~\ref{thm:definetti},
\[
p(C)
=
\int_{P[0,1]}q^{\otimes\mathbb N}(C) \rho(\mathrm dq)
=
1.
\]
Thus, $\bar X_M$ converges almost surely; denote its limit by $L$.
For every Borel set $B\subseteq[0,1]$, define
\[
C_B
\coloneqq 
\left\{
x\in C:
\lim_{M\to\infty}\frac1M\sum_{m=1}^M x_m\in B
\right\}.
\]
The strong law gives
$q^{\otimes\mathbb N}(C_B)
=
\mathbf 1_B \left(\mu(q)\right)$, where $\mathbf 1$ denotes the indicator function.
Consequently,
\begin{align*}
\mathbb P(L\in B)
=
p(C_B)=
\int_{P[0,1]}
q^{\otimes\mathbb N}(C_B) \rho(\mathrm dq)=
\int_{P[0,1]}
\mathbf 1_B \left(\mu(q)\right) \rho(\mathrm dq)=
(\mu_{\#}\rho)(B).
\end{align*}
Hence,
$\mathcal L(L)=\mu_{\#}\rho$.

Finally, the map $q\mapsto\mu(q)$ is continuous in the weak topology.
Since $\operatorname{supp}(\rho)$ is compact,
$\mu(\operatorname{supp}(\rho))$ is compact. Moreover, every
neighbourhood of $\mu(q)$, with
$q\in\operatorname{supp}(\rho)$, has positive
$\mu_{\#}\rho$-measure. Therefore,
$\operatorname{supp}(\mu_{\#}\rho)
=
\mu \left(\operatorname{supp}(\rho)\right)
=
\mathcal M_\rho$.
\end{proof}

Proposition \ref{prop:definetti-limit} allows the concentration inequality to be
expressed directly relative to the random almost sure limit of the
empirical means.

\begin{corollary}[Concentration Around the de Finetti Limit]
\label{cor:definetti-limit-concentration}
Under the assumptions of Lemma~\ref{lem:exchhoeff}, let $L$ be the
random variable from Proposition~\ref{prop:definetti-limit}. Then, for
every $M\in\mathbb N$ and every $t>0$,
\[
\mathbb P \left(\bar X_M-L\ge t\right)
\le e^{-2Mt^2},
\qquad
\mathbb P \left(L-\bar X_M\ge t\right)
\le e^{-2Mt^2}.
\]
Consequently,
\[
\mathbb P \left(\left|\bar X_M-L\right|\ge t\right)
\le 2e^{-2Mt^2}.
\]
\end{corollary}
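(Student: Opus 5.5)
The plan is to work on the path space $[0,1]^{\mathbb N}$ and use the full de Finetti representation of Theorem~\ref{thm:definetti}. Applying it to the infinite-dimensional event lets us condition on the component and the limit at the same time. Let $p$ be the joint law of $(X_m)_{m\in\mathbb N}$. Let $C$ be the set from the proof of Proposition~\ref{prop:definetti-limit}, and for $x\in C$ write $\ell(x)\coloneqq\lim_{M}\frac1M\sum_{m=1}^M x_m$. Since $L=\ell\circ(X_m)_m$ almost surely, for fixed $M$ and $t>0$ we have
\[
\mathbb P\left(\bar X_M-L\ge t\right)=p(E_t),
\qquad
E_t\coloneqq\left\{x\in C:\frac1M\sum_{m=1}^M x_m-\ell(x)\ge t\right\}.
\]
We first check that $E_t$ is Borel. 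The set $C$ is Borel, being defined through $\limsup$ and $\liminf$ of measurable functions, and $\ell$ is measurable on $C$. Theorem~\ref{thm:definetti} then gives $p(E_t)=\int_{P[0,1]}q^{\otimes\mathbb N}(E_t)\,\rho(\mathrm dq)$.

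The key step is to bound $q^{\otimes\mathbb N}(E_t)$ for each fixed $q$. By the strong law, $\ell(x)=\mu(q)$ for $q^{\otimes\mathbb N}$-almost every $x$. Up to a $q^{\otimes\mathbb N}$-null set, $E_t$ therefore coincides with the cylinder
\[
\left\{x:\frac1M\sum_{m=1}^M x_m-\mu(q)\ge t\right\},
\]
whose $q^{\otimes\mathbb N}$-probability equals $q^{\otimes M}(B_t(q))$, with $B_t(q)$ as in the proof of Lemma~\ref{lem:exchhoeff}. Theorem~\ref{thm:hoeff} bounds this by $e^{-2Mt^2}$, because the coordinates are i.i.d.\ with mean $\mu(q)$ under $q^{\otimes M}$. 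Integrating against $\rho$ gives $\mathbb P(\bar X_M-L\ge t)\le e^{-2Mt^2}$.

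The lower tail follows by the same argument, using the reversed event and the second inequality in Theorem~\ref{thm:hoeff}. The two-sided bound then follows from the union bound, since $\{|\bar X_M-L|\ge t\}$ is the union of the two one-sided events.

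The main obstacle is the bookkeeping in the key step. We must justify that the event involving the limit $L$ is Borel on path space, so that de Finetti's representation applies to it. We must also justify replacing $\ell$ by the constant $\mu(q)$ inside each product measure, using the almost-sure equality and the fact that null sets do not change probabilities. The Hoeffding estimate itself is routine.
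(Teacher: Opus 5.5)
Your proposal is correct and follows the paper's proof essentially step by step. Both arguments express the event on path space, apply the full de Finetti representation, replace the limit functional by $\mu(q)$ almost surely under each $q^{\otimes\mathbb N}$ via the strong law, invoke Hoeffding, integrate against $\rho$, and finish with the union bound. The only cosmetic difference is that the paper avoids restricting to the convergence set $C$ by defining $\ell(x)=\limsup_{N}\frac1N\sum_{n=1}^N x_n$ on all of $[0,1]^{\mathbb N}$, which makes measurability immediate.
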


\begin{proof}
Define the measurable map $\ell:[0,1]^{\mathbb N}\to[0,1]$ by
$\ell(x)
\coloneqq 
\limsup_{N\to\infty}\frac1N\sum_{n=1}^N x_n$. 
By Proposition~\ref{prop:definetti-limit},
\[
L=\ell \left((X_m)_{m\in\mathbb N}\right)
\qquad\text{almost surely}.
\]
Moreover, for every $q\in P[0,1]$, the strong law of large numbers gives
\[
\ell(x)=\mu(q)
\qquad
q^{\otimes\mathbb N}\text{-almost surely}.
\]

For fixed $M\in\mathbb N$ and $t>0$, define
\[
E_{M,t}^{+}
\coloneqq 
\left\{
x\in[0,1]^{\mathbb N}:
\frac1M\sum_{m=1}^M x_m-\ell(x)\ge t
\right\}.
\]
Since $\ell=\mu(q)$ almost surely under $q^{\otimes\mathbb N}$,
Hoeffding's inequality gives
\[
q^{\otimes\mathbb N}(E_{M,t}^{+})
=
q^{\otimes M} \left(
\frac1M\sum_{m=1}^M x_m-\mu(q)\ge t
\right)
\le e^{-2Mt^2}.
\]
Therefore, by Theorem~\ref{thm:definetti},
\begin{align*}
\mathbb P \left(\bar X_M-L\ge t\right)
=
\int_{P[0,1]}
q^{\otimes\mathbb N}(E_{M,t}^{+}) \rho(\mathrm dq)\le e^{-2Mt^2}.
\end{align*}

Applying the same argument to
\[
E_{M,t}^{-}
\coloneqq 
\left\{
x\in[0,1]^{\mathbb N}:
\ell(x)-\frac1M\sum_{m=1}^M x_m\ge t
\right\}
\]
gives
$\mathbb P \left(L-\bar X_M\ge t\right)
\le e^{-2Mt^2}$.
The two-sided inequality follows by the union bound.
\end{proof}

A related two-sided inequality can also be recovered from
\cite[Theorem~5.2]{lin2026bounded}. Fix
$\varepsilon\in(0,t)$, and let
$\Omega_0
\coloneqq 
\{
\bar X_N\rightarrow L
\}$.
By Proposition~\ref{prop:definetti-limit},
$\mathbb P(\Omega_0)=1$. On $\Omega_0$, we have
\[
\left\{
|\bar X_M-L|\ge t
\right\}
\cap\Omega_0
\subseteq
\liminf_{N\to\infty}
\left\{
|\bar X_M-\bar X_N|\ge t-\varepsilon
\right\}.
\]
Consequently, \cite[Theorem~5.2]{lin2026bounded}, applied with
$n=M$ and range length $R=1$, gives
\begin{align*}
\mathbb P \left(|\bar X_M-L|\ge t\right)
&=
\mathbb P \left(
\{|\bar X_M-L|\ge t\}\cap\Omega_0
\right)\\
&\le
\liminf_{N\to\infty}
\mathbb P \left(
|\bar X_M-\bar X_N|\ge t-\varepsilon
\right)\\
&\le
\liminf_{N\to\infty}
2\exp \left(
-\frac{2(t-\varepsilon)^2MN}{N-M}
\right)\\
&=
2e^{-2M(t-\varepsilon)^2}.
\end{align*}
Letting $\varepsilon\downarrow0$ yields
\[
\mathbb P \left(|\bar X_M-L|\ge t\right)
\le
2e^{-2Mt^2}.
\]
Our formulation additionally gives the two one-sided inequalities
directly and makes their connection with the de Finetti limit explicit.
Since $L\in\mathcal M_\rho$ almost surely, our result also gives
\[
\operatorname{dist}(\bar X_M,\mathcal M_\rho)
\le
|\bar X_M-L|
\qquad\text{almost surely}.
\]
It therefore provides an alternative derivation of
Corollary~\ref{cor:mean-set}. Likewise, since
\[
\tilde\mu_-\le L\le\tilde\mu_+
\qquad\text{almost surely},
\]
it implies the two inequalities in Lemma~\ref{lem:exchhoeff}.

\begin{remark}[The Common Marginal Does Not Determine the Concentration Centre]
Let $\Theta\sim\operatorname{Bernoulli}(1/2)$ and set
$X_m \coloneqq \Theta$, for every $m\in\mathbb N$. Then $(X_m)_{m\in\mathbb N}$ is infinitely exchangeable and every
$X_m$ has the $\operatorname{Bernoulli}(1/2)$ distribution, but
$\bar X_M=\Theta$ for every $M$. Hence
\[
\mathbb P \left(
\left|\bar X_M-\frac12\right|\ge\frac12
\right)=1.
\]
The de Finetti mixing measure is
\[
\rho
=
\frac12\delta_{\delta_0}
+
\frac12\delta_{\delta_1},
\]
so
\[
\mathcal M_\rho=\{0,1\},
\qquad
\tilde\mu_-=0,
\qquad
\tilde\mu_+=1.
\]
By contrast, an i.i.d. $\operatorname{Bernoulli}(1/2)$ sequence has
the same common marginal distribution but
$\mathcal M_\rho=\{1/2\}$.
\end{remark}

In the i.i.d. case, the
mixing measure is concentrated at a single distribution, so this
support reduces to a singleton and the classical Hoeffding inequality
is recovered.


\begin{corollary}[Recovery of Hoeffding's Inequality]
\label{corr:recover}
Let $(X_m)_{m\in\mathbb N}$ be an i.i.d. sequence of $[0,1]$-valued
random variables, and set
$\mu\coloneqq \mathbb E[X_1]$.
Then, for every $M\in\mathbb N$ and every $t>0$,
\[
\mathbb P \left(\bar X_M-\mu\ge t\right)
\le e^{-2Mt^2},
\qquad
\mathbb P \left(\mu-\bar X_M\ge t\right)
\le e^{-2Mt^2}.
\]
\end{corollary}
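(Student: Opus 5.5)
The plan is to derive Corollary~\ref{corr:recover} from Lemma~\ref{lem:exchhoeff}. First I identify the de Finetti mixing measure of an i.i.d. sequence, and then I show that both centring quantities collapse to $\mu$.

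\textbf{Exchangeability and the mixing measure.} Let $r \coloneqq \mathbb P\circ X_1^{-1}\in P[0,1]$ be the common marginal law. By independence and identical distribution, the joint law of the sequence is $p=r^{\otimes\mathbb N}$, as in the example following Definition~\ref{def:exch-def}.
\begin{itemize}
\item Permutation invariance of the finite-dimensional marginals $r^{\otimes M}$ is immediate from the product formula. Hence $(X_m)_{m\in\mathbb N}$ is infinitely exchangeable, and Lemma~\ref{lem:exchhoeff} applies.
\item The Dirac measure $\delta_r\in P(P[0,1])$ represents $p$, because $\int q^{\otimes\mathbb N}(A)\,\delta_r(\mathrm dq)=r^{\otimes\mathbb N}(A)$ for every $A$; compare \eqref{eq:independet}.
\item By the uniqueness part of Theorem~\ref{thm:definetti}, the de Finetti mixing measure is therefore $\rho=\delta_r$.
\end{itemize}

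\textbf{The support of $\rho$.} Next I show that $\operatorname{supp}(\delta_r)=\{r\}$.
\begin{itemize}
\item Every open neighbourhood of $r$ has $\delta_r$-measure $1$, so $r$ lies in the support.
\item Let $q\neq r$. Since $P[0,1]$ with the weak topology is metrizable, hence Hausdorff, there is an open neighbourhood $U$ of $q$ with $r\notin U$. Then $\delta_r(U)=0$, so $q$ is not in the support.
\end{itemize}
Hence $\tilde\mu_+=\tilde\mu_-=\mu(r)$. Moreover, $\mu(r)=\int_{[0,1]}x\,r(\mathrm dx)=\mathbb E[X_1]=\mu$.

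\textbf{Conclusion.} Substituting $\tilde\mu_\pm=\mu$ into the two inequalities \eqref{eq:upbd1} of Lemma~\ref{lem:exchhoeff} gives exactly the two claimed bounds.

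None of these steps is hard. The only point needing care is the support computation, which uses the Hausdorff property of the weak topology on $P[0,1]$. One could avoid it altogether by using the essential-supremum characterisation stated after Lemma~\ref{lem:exchhoeff}: $\operatorname{ess\,sup}$ and $\operatorname{ess\,inf}$ of $\mu$ under $\delta_r$ both equal $\mu(r)$.

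The argument is not circular, even though the proof of Lemma~\ref{lem:exchhoeff} itself invokes Theorem~\ref{thm:hoeff}. The corollary is a consistency check that the exchangeable bound reduces to the classical inequality, not an independent proof of Hoeffding's inequality.
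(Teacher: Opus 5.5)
Your proposal is correct and follows essentially the same route as the paper: identify $\rho=\delta_r$ via the uniqueness part of Theorem~\ref{thm:definetti}, note $\operatorname{supp}(\rho)=\{r\}$ so that $\tilde\mu_-=\tilde\mu_+=\mu(r)=\mathbb E[X_1]$, and substitute into Lemma~\ref{lem:exchhoeff}. Your explicit checks of exchangeability and of the support of the Dirac measure, using the Hausdorff property of the weak topology, fill in details the paper leaves implicit.
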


\begin{proof}
Let
$q\coloneqq \mathbb P\circ X_1^{-1}$ 
be the common marginal distribution. Since the sequence is i.i.d., its
joint law is $q^{\otimes\mathbb N}$. Thus, $\delta_q$ is a de Finetti
mixing measure for its joint law. By uniqueness in
Theorem~\ref{thm:definetti},
$\rho=\delta_q$. 
Consequently,
\[
\operatorname{supp}(\rho)=\{q\},
\qquad
\tilde\mu_-=\tilde\mu_+
=\mu(q)
=\int_{[0,1]}x q(\mathrm dx)
=\mathbb E[X_1]
=\mu.
\]
Both inequalities now follow directly from
Lemma~\ref{lem:exchhoeff}.
\end{proof}

\subsection{Example: Exchangeable Bernoulli Mixture}\label{sec:example}

Consider the following hierarchical Bernoulli model. Let
$\Theta\sim\operatorname{Unif}[0.3,0.7]$,
and, conditionally on $\Theta$, let
\[
X_1,X_2,\ldots\mid\Theta
\stackrel{\mathrm{i.i.d.}}{\sim}
\operatorname{Bernoulli}(\Theta).
\]
The unconditional sequence $(X_m)_{m\in\mathbb N}$ is infinitely
exchangeable, although it is not independent. Let
$T(\theta) \coloneqq \operatorname{Bernoulli}(\theta)$.
Its de Finetti mixing measure is the pushforward
$\rho
=
T_{\#}\operatorname{Unif}[0.3,0.7]$,
and therefore
\[
\operatorname{supp}(\rho)
=
\left\{
\operatorname{Bernoulli}(\theta):
\theta\in[0.3,0.7]
\right\}.
\]
Since
$\mu \left(\operatorname{Bernoulli}(\theta)\right)=\theta$,
the component mean set is
$\mathcal M_\rho=[0.3,0.7]$,
and hence
$\tilde\mu_-=0.3$ and 
$\tilde\mu_+=0.7$.

\begin{figure}
\centering

\begin{tikzpicture}

\begin{axis}[
    name=left,
    width=0.5\textwidth,
    height=0.35\textwidth,
    xlabel={$M$},
    ylabel={$\bar{X}_M$},
    ymin=0, ymax=1,
    xmin=1, xmax=200,
    grid=both,
    title={Conditional running means}
]

\addplot[dashed, black] coordinates {(1,0.3) (200,0.3)};
\addplot[dashed, black] coordinates {(1,0.7) (200,0.7)};

\foreach \i in {1,...,10} {
    \addplot[
        only marks,
        mark=*,
        mark size=0.7pt,
        blue,
        opacity=0.35
    ]
    table[
        x=M,
        y=seq\i,
        col sep=comma
    ] {exchangeable_data.csv};
}

\end{axis}

\begin{axis}[
at={(left.right of east)},
    anchor=left of west,
    xshift=0.3cm,
    ybar,
    bar width=6pt,
    width=0.5\textwidth,
    height=0.35\textwidth,
    xlabel={Final Value},
    ylabel={Frequency},
    ymax=26,
    title={Histogram of Means Across Experiments},
    grid=both,
    enlargelimits=0.05
]

\addplot[dashed, black] coordinates {(0.3,0) (0.3,30)};
\addplot[dashed, black] coordinates {(0.7,0) (0.7,30)};

\addplot+[
    hist={
        bins=15
    },
    fill=blue!40,
    draw=black
] table [
    y index=0
] {final_values.csv};

\end{axis}
\end{tikzpicture}
\caption{Exchangeable Bernoulli mixture described in the text.
Left: running empirical means for ten independent realizations of the
hierarchical model. Right: histogram of $\bar X_{200}$ over 200
independent realizations of the full hierarchical experiment. The
dashed lines indicate $\tilde\mu_-=0.3$ and $\tilde\mu_+=0.7$.}\label{fig:exchangeable-mixture}
\end{figure}

\noindent By the conditional strong law of large numbers,
\[
\bar X_M\longrightarrow\Theta
\qquad\text{almost surely}.
\]
Thus, unlike in the i.i.d. case with a fixed Bernoulli parameter, the
empirical mean does not generally converge to a deterministic value.
Different realizations converge to different values in
$\mathcal M_\rho=[0.3,0.7]$, as illustrated by the left panel of
Figure~\ref{fig:exchangeable-mixture}. The right panel approximates the
unconditional distribution of $\bar X_{200}$ and illustrates its
concentration around the component-mean set. In this example the
component-mean set is already an interval, so it coincides with
$[\tilde\mu_-,\tilde\mu_+]$.

\section{Conclusion}

\noindent We established Hoeffding-type concentration inequalities for
empirical means of bounded infinitely exchangeable sequences. 
We also showed that the empirical mean concentrates directly around
its random almost sure de Finetti limit; the deterministic
component mean set and extremal bounds follow as consequences.
In Corollary~\ref{corr:recover}, the classical Hoeffding inequality is
recovered when the de Finetti mixing measure is concentrated at a
single distribution, equivalently when the sequence is i.i.d. The
exponential rate in our inequalities depends only on the deviation
level, the sample size, and the range $[0,1]$, whereas the centring set
and its extremal points depend on the de Finetti mixing law.

The inequalities provide finite sample concentration bands for
exchangeable loss sequences. They may be used to construct
generalization or confidence statements when the component-mean set,
or suitable bounds on its extremal points, are known or can be obtained
from additional assumptions. The concentration result alone does not
provide an implementable confidence interval, because these centring
quantities are generally unknown and are not determined by the common
marginal distribution. Future work may therefore investigate
conditions under which the component-mean set, or its smallest and
largest elements, can be identified or bounded from observable data.

\section*{Acknowledgments}
\noindent Nina M. Gottschling acknowledges research sponsored by the Laboratory Directed Research and Development Program of Oak Ridge National Laboratory, managed by UT-Battelle, LLC, for the U. S. Department of Energy. Michele Caprio gratefully acknowledges support from the Prob\_AI Hub and the London Mathematical Society.
ChatGPT 5.6 Sol was used to assist with proofreading.

\bibliographystyle{abbrv}
\bibliography{references.bib}

\end{document}